\newtheorem{thm}{Theorem}
\newtheorem{remark}{Remark}
\newtheorem{defin}{Definition}
\title{Submartingale Condition for Weak Convergence for Semi-Markov Processes
}
\author{
  Vitaliy Golomoziy \\
  Taras Shevchenko National University of Kyiv \\
  Kyiv\\
  \texttt{vitaliy.golomoziy@knu.ua} \\
}
\begin{document}
\maketitle

\begin{abstract}
In this paper, we consider a modified version of a well-known submartingale condition for the weak convergence of probability measures, adapted to the semi-Markov case. In this setting, it is convenient to work with an embedded Markov chain and the filtration generated by jump times. We demonstrate that a straightforward restatement of the classical result is not valid, and that an additional condition is required.
\end{abstract}

\keywords{Weak convergence of probability measures \and semi-Markov processes \and submartingale condition for weak convergnce}

\section{Introduction}

The submartingale condition for weak convergence was introduced in the celebrated book \cite{strook}, Theorem 1.4.6, and is stated as follows:

\begin{thm}\label{strook_thm}[D. Strook, S. Varadhan]
  Let $\Omega = C([0,\infty); \mathbb{R}^d)$ be the space of continuous functions on $[0,\infty)$ with values in $\mathbb{R}^d$, and
  let $(\EuScript{M}_t)_{t\ge 0}$ be the corresponding canonical filtration.
  Let $\EuScript{P}$ be a family of probability measures on $(\Omega, (\EuScript{M}_t)_{t\ge 0})$, such that for any non-negative $f \in C^\infty_0(\mathbb{R}^d)$ there exists a
  constant $A_f$ such that, for all $P \in \EuScript{P}$, the stochastic process
  \[
    \left(f(x(t)) + A_f t, \EuScript{M}_t\right)_{t\ge 0}
  \]
  is a non-negative $P$-submartingale. Assume also that $A_f$ can be selected in such a way that it works for all translations of $f$ (i.e. if $g(x) = f(x-a)$, $a\in \mathbb{R}^d$, then $A_g = A_f$).
  Assume further that
  \begin{equation}\label{cond_1}
    \lim_{c\to \infty} \sup_{P\in \EuScript{P}} P(|x(0)| \ge c) = 0.
  \end{equation}
  Then the family $\EuScript{P}$ is weakly precompact (and thus tight).
\end{thm}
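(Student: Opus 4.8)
The plan is to derive the statement from Prokhorov's theorem together with the Arzel\`a--Ascoli characterisation of compactness in $\Omega=C([0,\infty);\mathbb{R}^d)$ (with the topology of uniform convergence on compacts): a closed set $K\subseteq\Omega$ is compact if and only if $\{x(0):x\in K\}$ is bounded and, for every $T>0$, the modulus of continuity $w_T(x,\delta):=\sup\{|x(s)-x(t)|:s,t\le T,\ |s-t|\le\delta\}$ tends to $0$ with $\delta$ uniformly over $K$. Since condition \eqref{cond_1} is exactly tightness of the laws of $x(0)$, the whole problem reduces to the oscillation estimate
\[
\lim_{\delta\downarrow0}\ \sup_{P\in\EuScript{P}}\ P\big(w_T(x,\delta)\ge\varepsilon\big)=0\qquad\text{for every }\varepsilon,T>0 .
\]

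The heart of the proof is to convert the submartingale hypothesis into an upper bound on short-time displacements, and this is the step I expect to be the main obstacle. Fix $\varepsilon>0$ and a fixed $\phi\in C^\infty(\mathbb{R}^d)$ with $0\le\phi\le1$, $\phi(0)=0$, and $\phi\equiv1$ on $\{|y|\ge\varepsilon\}$. Then $g:=1-\phi$ lies in $C^\infty_0(\mathbb{R}^d)$ and is non-negative, so the hypothesis supplies a constant $A:=A_g$ --- the same for every translate $1-\phi_a$, where $\phi_a:=\phi(\cdot-a)$ --- such that $g(x(t))+At$ is a non-negative $P$-submartingale for all $P\in\EuScript{P}$; equivalently, $\phi_a(x(t))-At$ is a $P$-supermartingale for each $a$. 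The point is that the hypothesis is used here in the supermartingale (upper-bound) direction: applying Doob directly to the submartingale $f(x(t))+A_ft$ only yields the trivial bound $\ge1$. For a finite stopping time $\tau$ one restarts this supermartingale at $\tau$ centred at $x(\tau)$: using translation invariance of $A$ and optional sampling, $V_s:=\phi_{x(\tau)}(x(\tau+s))-As+Ah$, $0\le s\le h$, is a non-negative $P$-supermartingale for $(\EuScript{M}_{\tau+s})_{s\ge0}$ with $V_0=Ah$, because $\phi_{x(\tau)}(x(\tau))=\phi(0)=0$. Doob's maximal inequality then gives $P(\sup_{0\le s\le h}V_s\ge1\mid\EuScript{M}_\tau)\le Ah$; and on the event that $x$ exits $B(x(\tau),\varepsilon)$ before time $\tau+h$, path continuity forces $|x(\tau+s)-x(\tau)|=\varepsilon$ at the exit instant, where $\phi_{x(\tau)}=1$, so $V\ge1$ there. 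Hence
\[
P\Big(\sup_{\tau\le u\le\tau+h}|x(u)-x(\tau)|\ge\varepsilon\ \Big|\ \EuScript{M}_\tau\Big)\ \le\ Ah\qquad\text{a.s.}
\]

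From this displacement bound the oscillation estimate follows by a stopping-time iteration. Fix $T>0$ and $\eta>0$, and define $\tau_0:=0$, $\tau_{k+1}:=\inf\{u>\tau_k:|x(u)-x(\tau_k)|\ge\varepsilon\}$. The bound gives $P(\tau_{k+1}-\tau_k\le h\mid\EuScript{M}_{\tau_k})\le Ah$, hence $P(\tau_{k+1}-\tau_k>\tfrac1{2A}\mid\EuScript{M}_{\tau_k})\ge\tfrac12$; since $\tau_0,\dots,\tau_k$ are $\EuScript{M}_{\tau_k}$-measurable, $\sum_{k<n}\mathbf{1}\{\tau_{k+1}-\tau_k>\tfrac1{2A}\}$ stochastically dominates a $\mathrm{Bin}(n,\tfrac12)$ random variable, so $P(\tau_n\le T)\le P(\mathrm{Bin}(n,\tfrac12)\le2AT)$, which is below $\eta/2$ once $n=n(A,T,\eta)$ is large enough, uniformly in $P$. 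Also $P\big(\min_{k<n}(\tau_{k+1}-\tau_k)\le\delta\big)\le nA\delta$. Off the union of these two events, every subinterval of $[0,T]$ of length at most $\delta$ contains at most one $\tau_k$, and on an interval containing at most one $\tau_k$ the oscillation of $x$ is at most $4\varepsilon$ (between consecutive $\tau_k$'s the path stays within $\varepsilon$ of its left endpoint); thus $w_T(x,\delta)\le4\varepsilon$ there. Hence $\sup_{P\in\EuScript{P}}P\big(w_T(x,\delta)>4\varepsilon\big)\le\eta$ for all sufficiently small $\delta$, and since $\varepsilon,\eta$ were arbitrary this is the required estimate.

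Finally I would assemble a compact set. Given $\eta>0$, choose $R$ with $\sup_{P\in\EuScript{P}}P(|x(0)|>R)\le\eta/2$ by \eqref{cond_1}, and for each $m\ge1$ choose $\delta_m$ with $\sup_{P\in\EuScript{P}}P\big(w_m(x,\delta_m)>1/m\big)\le\eta\,2^{-m-1}$ by the estimate just proved. Then $K:=\{x:|x(0)|\le R\}\cap\bigcap_{m\ge1}\{x:w_m(x,\delta_m)\le1/m\}$ is closed, compact by Arzel\`a--Ascoli, and satisfies $\inf_{P\in\EuScript{P}}P(K)\ge1-\eta$. Therefore $\EuScript{P}$ is tight, and weak precompactness follows from Prokhorov's theorem on the Polish space $\Omega$.
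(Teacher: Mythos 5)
Your proposal is correct and is essentially the Stroock--Varadhan argument that the paper cites and then adapts in Theorem~2: the same bump-type test function equal to $1$ at the centre and vanishing off an $\varepsilon$-ball, restarted at the oscillation stopping times via the translation invariance of $A_f$, a Doob-type estimate $P(\text{exit within } h \mid \EuScript{M}_\tau)\le A h$, and a binomial (in place of the paper's exponential-moment) control of the number of $\varepsilon$-oscillations before $T$. The only step you assert without justification is that $V_s=\phi_{x(\tau)}(x(\tau+s))-As+Ah$ is a conditional supermartingale even though the centre $x(\tau)$ is random; making this rigorous requires precisely the countable family of rational translates together with regular conditional probabilities (the paper's null sets $F_q$, the approximating $f_{\tilde q}$, and the resulting $2\varepsilon$ terms), a routine completion that uses the translation-invariance hypothesis exactly where you invoke it and does not change the route.
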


This condition is useful in many situations, especially in connection with diffusion processes and the associated martingale problem. However, in the theory
of semi-Markov processes we typically face a discrete-time martingale problem. 

For instance, it is used as a main tool for establishing weak convergence in \cite{korolyk} (see Preface, page vii).
Next, we give some basic definitions and recall the main facts about semi-Markov processes. We will use \cite{korolyk} as the main source, and we restrict our attention to processes with values in $\mathbb{R}^d$.

\begin{defin}
  Function $Q \colon \mathbb{R}^d \times \EuScript{B} \times [0,\infty) \to [0,1]$, where $\EuScript{B}$ is a Borel sigma-field in $\mathbb{R}^d$, called a
   \textit{semi-Markov kernel} on $(\mathbb{R}^d, \EuScript{B})$ if
   \begin{itemize}
   \item For every $x\in \mathbb{R}^d$ and $B\in \EuScript{B}$, function $Q(x, B, \cdot)$ is non-decreasing, right-continuous real function, such that $Q(x,B,0)=0$.
   \item For every $t\ge 0$, $Q(\cdot, \cdot, t)$ is a sub-Markov kernel on $(\mathbb{R}^d, \EuScript{B})$.
   \item $P(\cdot, \cdot) = Q(\cdot, \cdot, \infty)$ is a Markov kernel on $(\mathbb{R}^d, \EuScript{B})$.
   \end{itemize}
\end{defin}

\begin{defin}
  An $\mathbb{R}^d$-valued \emph{Markov renewal process} is a two-component, time-homogeneous Markov chain 
  $(x_n, \tau_n)$, $n \ge 0$ taking values in $\mathbb{R}^d \times [0,\infty)$,
  with transition probability defined by a semi-Markov kernel $Q$ as follows
  \[
    \mathbb{P}\!\left( x_{n+1} \in B, \ \tau_{n+1} - \tau_n \le t \ \middle|\ \EuScript{G}_n \right) = Q(x_n, B, t),
  \]
  for any integer $n \ge 0$, real $t \ge 0$, and Borel set $B \subset \mathbb{R}^d$. We assume that $\tau_0=0$. 
  Here $\left(\EuScript{G}_n\right)_{n \ge 0}$ is the natural filtration generated by $\{(x_n, \tau_n): n \ge 0\}$.
\end{defin}

In what follows we assume that semi-Markov kernel $Q(x, B, t)$ admits a representation
\begin{equation}\label{semi_mark_repr}
  Q(x, B, t) = P(x, B) F_x(t),
\end{equation}
where $P(x,B)$ is a Markov kernel, and $F_x$ is a distribution function for every $x$. 

Thus, $\tau_{n+1} - \tau_n$ is a holding time, and when \eqref{semi_mark_repr} holds, it has distribution $F_x$, where $x_n = x_{\tau_n} = x$. We denote its mean by
\begin{equation}\label{mx_def}
  m(x) = \int_0^\infty t \, F_x(dt) = \int_0^\infty \big(1 - F_x(t)\big)\, dt.
\end{equation}

\begin{defin}
  A \emph{semi-Markov process} associated with the Markov renewal process $(x_n, \tau_n)$, $n \ge 0$, is the stochastic process
  \[
    x(t) = x_{\nu(t)}, \quad t \ge 0,
  \]
  where
  \[
    \nu(t) = \sup\{n \ge 0 : \tau_n \le t\}, \quad t \ge 0,
  \]
  is the counting process of jumps.
\end{defin}

We also introduce the continuous version of $\tau_n$ by
\[
  \tau(s) = \tau_{\nu(s)},
\]
and the continuous filtration generated by the semi-Markov process by
\[
 \EuScript{F}_t = \sigma\big(x(s), \tau(s), 0 \le s \le t\big).
\]
  
\begin{defin}
  Let $q(x) = 1/m(x)$. We define the compensating operator $\mathbb{L}$ of the Markov renewal process $(x_n, \tau_n)$, $n \ge 0$ (or of the associated semi-Markov process $x(t)$, $t \ge 0$) by
  \[
  \mathbb{L}\varphi(x,t) = q(x)\!\left[\int_0^\infty F_x(ds) \int_{\mathbb{R}^d} P(x,dy)\,\varphi(y, t+s) - \varphi(x,t)\right],
  \]
  where $\varphi(x,t)$ is a function from an appropriate class of test functions.
  By convention, we set $\mathbb{L}\varphi(x, t) = 0$ when $q(x) = 0$.
\end{defin}

When $\varphi(x,t) = \varphi(x)$ does not depend on $t$, this expression reduces to
\[
  \mathbb{L}\varphi(x) = q(x)\left(\int_{\mathbb{R}^d} P(x,dy)\,\varphi(y) - \varphi(x)\right).
\]

Finally, from Proposition 1.4 in \cite{korolyk}, we know that the discrete-time process
\[
  Z^\varphi_n := \varphi(x_n, \tau_n) - \sum_{i=1}^n (\tau_i - \tau_{i-1})\,\mathbb{L}\varphi(x_{i-1}, \tau_{i-1}), 
  \quad n \ge 0,
\]
is a martingale with respect to the discrete filtration $\EuScript{G}_n = \sigma(x_k, \tau_k : 0 \le k \le n)$, $n \ge 0$.

In applications, we typically use this fact to establish a discrete-time version of the submartingale condition of Theorem~\ref{strook_thm}. Namely, we can establish the following:

\textbf{Condition D.}  
\textit{Let $\EuScript{U}$ be an index set, and let $x^u(t)$, $u \in \EuScript{U}$, be a family of semi-Markov processes (with associated Markov renewal processes $(x^u_n, \tau^u_n)$, $n \ge 0$).  
Assume that condition~\eqref{cond_1} holds, and that for any non-negative $\varphi \in C^\infty_0(\mathbb{R}^d)$ there exists a constant $A_\varphi \ge 0$, the same for all translations of $\varphi$, such that for every $u \in \EuScript{U}$ the \textbf{discrete-time} process
\[
  \bigl(\varphi(x^u_n) + A_\varphi \tau^u_n\bigr)_{n \ge 0}
\]
is a non-negative submartingale with respect to its natural filtration $(\EuScript{G}^u_n)_{n \ge 0}$.}

The question is whether \textbf{Condition D} is sufficient for tightness. The answer is no, as we show in Section~\ref{counterexample_section}, so an additional condition is required. We introduce and discuss this condition in Theorem~\ref{submart_tightness}. We will see that it is essential to ensure that there is no positive time interval during which no jumps occur for all semi-Markov processes in the family. In other words, the frequency of jumps must increase. We also show that in some important and typical applications this condition indeed holds.

This paper is organized as follows. Section~\ref{main_section} contains the main result, and Section~\ref{counterexample_section} presents a counterexample that justifies the additional condition in Theorem~\ref{submart_tightness}. Finally, Section~\ref{scale_section} is devoted to a special case—families of semi-Markov processes obtained by scaling a single given process in space and time—which are important examples in the theory of semi-Markov approximations.

\section{Main result}\label{main_section}
In this section we assume that $\Omega = D[0,\infty)$ is the Skorokhod space equipped with the standard Borel $\sigma$-field $\EuScript{B}(\Omega)$ (with respect to
the Skorokhod topology; see \cite{billingsley}, Chapter~16 for details).

Note that any semi-Markov process defined through a Markov renewal process $(x_n, \tau_n)_{n \ge 0}$ has trajectories in a subspace $\Omega^*$ consisting of functions that have at most one accumulation point of jump times. In other words, $\omega \in \Omega^*$ if and only if there exists a non-decreasing sequence
\[
0 \le t_0 \le t_1 \le \cdots \le t_n \le \cdots
\]
such that $\omega(t-) \neq \omega(t)$ if and only if $t \in \{t_n : n \ge 0\}$. For such $\omega \in \Omega^*$, we define a non-decreasing sequence of jump times by $\tau_n(\omega) = t_n$. For all other $\omega \in \Omega \setminus \Omega^*$, we set $\tau_n(\omega) = 0$. Since $\Omega^*$ is Borel-measurable, this construction yields a sequence of non-decreasing stopping times with respect to the natural coordinate filtration.

To simplify notation, we adopt the following convention. Let $(\xi_n)_{n \ge 0}$ be a sequence of random variables and let $\tau \ge 0$ be an integer-valued
random variable. We will write $\xi_\tau(\omega)$ for $\xi_{\tau(\omega)}(\omega)$.

\begin{thm}\label{submart_tightness}
  Let $\left(\zeta^u(t)\right)_{t \ge 0}$, $u \in \{0,1,2,\ldots\}$, be a sequence of semi-Markov processes with values in $\mathbb{R}^d$, and let $\EuScript{P} = \{P^u\}_{u \ge 0}$ be the corresponding family of distributions on the Skorokhod space $(\Omega, \EuScript{B}(\Omega))$.
Let $\{\tau_n(\omega) : n \ge 0\}$ be the non-decreasing sequence of jump times defined above, and define the corresponding value at jump time $\tau_n$ by
\[
X_n(\omega) = \omega(\tau_n(\omega)).
\]
  Assume the following conditions hold.
  \begin{itemize}
    \item[(i)] For every $T > 0$,
    \begin{equation}\label{sumbart_tightness_cond1}
      \lim_{a \to \infty} \limsup_u P^u\!\left(\left\{\omega \in \Omega : \sup_{t \in [0,T]} \bigl|\omega(t)\bigr| \ge a \right\}\right) = 0.
    \end{equation}
    \item[(ii)] Assume that for every non-negative $f \in C^\infty_0(\mathbb{R}^d)$ there exists a constant $A_f \ge 0$ such that the discrete-time process
    \[
      \bigl(f(X_n) + A_f \tau_n\bigr)_{n \ge 0}
    \]
    is a non-negative submartingale with respect to the filtration $\EuScript{G}_n = \sigma(X_k, \tau_k : 0 \le k \le n)$.  
    Assume also that the choice of $A_f$ can be made so that it works for all translates of $f$.
    \item[(iii)] For any $n$ and $t$, denote the next jump after $\tau_n + t$ by
    \begin{equation}\label{sumbart_tightness_hat_tau_def}
      \hat\tau_n(t) = \hat\tau_n(t; \omega) = \inf_{m > n} \{\tau_m(\omega) : \tau_m(\omega) > \tau_n(\omega) + t\},
    \end{equation}
    and define the conditional expectation of the time between $\tau_n + t$ and the next jump by
    \begin{equation}\label{submart_tightness_dn_def}
      d^u_x(t) = E^u\!\bigl[\hat\tau_n(t) - \tau_n - t \,\big|\, X_n = x\bigr], \quad x \in \mathbb{R}^d.
    \end{equation}
    Assume that
    \begin{equation}\label{sumbart_tightness_dens_of_jumps}
      \lim_{t \to 0} \limsup_{u \to \infty} \sup_{x \in \mathbb{R}^d} d^u_x(t) = 0.
    \end{equation}
  \end{itemize}
  Then the family $\EuScript{P}$ is tight in $D[0,\infty)$.
\end{thm}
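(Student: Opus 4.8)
The plan is to establish tightness in $D[0,\infty)$ via the standard criterion (see \cite{billingsley}, Theorem~16.8): it suffices to show (a) for every $T>0$ and $\varepsilon>0$ there is a compact set $K\subset\mathbb{R}^d$ with $\limsup_u P^u(\omega(t)\in K \text{ for all } t\in[0,T])\ge 1-\varepsilon$, and (b) the modulus-of-continuity condition, namely for every $T>0$, $\lim_{\delta\to 0}\limsup_u P^u(w'(\omega,\delta,T)\ge\varepsilon)=0$, where $w'$ is the Skorokhod modulus. Condition (i) handles the compact-containment part (a) directly after noting that a bounded set's closure is compact in $\mathbb{R}^d$. The entire work is in part (b).

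The key idea for (b) is to transfer the classical Stroock--Varadhan argument from continuous time to the embedded chain and then control the time-change. First I would fix a non-negative $f\in C^\infty_0(\mathbb{R}^d)$ and, using (ii), apply the optional stopping / maximal submartingale inequality to the discrete submartingale $(f(X_n)+A_f\tau_n)_{n\ge 0}$ between the (discrete) stopping times $\nu(\tau_n+s)$-type indices, exactly as in \cite{strook}, to obtain an estimate of the form
\[
  P^u\!\left(\sup_{k:\ \tau_k\le t+\delta}\bigl|f(X_k)-f(X_{\nu(t)})\bigr|\ge\varepsilon \ \middle|\ \EuScript{G}_{\nu(t)}\right)\le C\bigl(\omega_f(\varepsilon)+A_f\,\delta + R^u_\delta\bigr),
\]
where $\omega_f$ is a modulus depending only on $f$ (via its translates, as granted by (ii)) and $R^u_\delta$ is a remainder coming from the fact that the discrete clock $\tau_k$ overshoots the continuous time $t+\delta$. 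Running this over a finite family of test functions $f$ that separates points on the compact set $K$ from part (a) yields control on $\sup_{\tau_k\in[t,t+\delta]}|X_k - X_{\nu(t)}|$, i.e.\ on the oscillation of $\omega$ over the jump times falling in $[t,t+\delta]$.

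The remaining — and genuinely new — obstacle is that oscillation control over \emph{jump times} in $[t,t+\delta]$ is not the same as control over \emph{all} times in $[t,t+\delta]$: the process is constant between jumps, so $w'(\omega,\delta,T)$ over $[t,t+\delta]$ equals the oscillation over the jump times in a slightly enlarged window, \emph{provided} the window $[t,t+\delta]$ actually contains enough jumps and, crucially, provided that the interval from the last jump before $t+\delta$ to the first jump after it is short. This is precisely what condition (iii) supplies: by \eqref{sumbart_tightness_dens_of_jumps} and Markov's inequality, for $t$ small the probability that $\hat\tau_n(t)-\tau_n-t$ exceeds any fixed threshold is uniformly small for large $u$, so with high probability no jump-free interval of length $\ge\delta$ straddles a given point, and the remainder term $R^u_\delta$ above is controlled. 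I would make this quantitative by covering $[0,T]$ by finitely many points $t_j=j\delta$, applying the $\hat\tau$-estimate at each, and taking a union bound, letting $\delta\to 0$ after $u\to\infty$ so that the $\limsup_u\sup_x d^u_x(\delta)$ term vanishes. Combining the jump-time oscillation bound with this no-long-gap event gives the modulus-of-continuity estimate (b), and together with (a) this proves tightness. The main difficulty, and the reason (iii) cannot be dropped, is exactly this step: without a uniform bound forcing jump frequency to grow, a single long holding time can produce an $O(1)$ jump in $\omega$ over an arbitrarily short time interval while the embedded-chain submartingale condition remains satisfied, which is what the counterexample in Section~\ref{counterexample_section} exhibits.
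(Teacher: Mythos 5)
Your setup (Billingsley, Theorem~16.8; condition~(i) for compact containment; all work in the $w'$ estimate \eqref{submart_tightness_cond2}) matches the paper, and you correctly identify why (iii) is needed, but the core of your argument — a deterministic grid $t_j=j\delta$ covering $[0,T]$ with a union bound over windows — does not deliver \eqref{submart_tightness_cond2}. The best per-window estimate the hypotheses can give (and the one the paper actually proves, inequality \eqref{submart_tightness_main_ineq}) is of order $A_f\bigl(\delta+\sup_x d^u_x(\delta)\bigr)+2\varepsilon$; multiplied by the $\sim T/\delta$ windows, the first term leaves a non-vanishing $A_f T$ as $\delta\to 0$, and the second becomes $\tfrac{T}{\delta}\sup_x d^u_x(\delta)$, which condition \eqref{sumbart_tightness_dens_of_jumps} does not control in the prescribed order of limits ($u\to\infty$ first, then $\delta\to0$: there is no rate in $\delta$). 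The same multiplicity problem kills your Markov-inequality treatment of the "no long gap straddles a grid point" event. In addition, your target quantity $\sup_{\tau_k\in[t,t+\delta]}|X_k-X_{\nu(t)}|$ over deterministic windows is effectively the plain modulus $w$, not $w'$: $w'$ permits partition points at jump times, so a single macroscopic jump inside a window contributes nothing to $w'$ but everything to your window oscillation; bounding the latter uniformly would amount to proving $C$-tightness, which the hypotheses do not give. The relevant quantity is the gap between successive times of $\rho/4$-oscillation, not the oscillation inside prescribed windows.

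The paper closes exactly this gap by following Stroock--Varadhan's structure rather than a grid: it introduces the random oscillation times $s_n=\tau_{\nu_n}$ (which, the paths being piecewise constant, are jump times), applies the discrete submartingale of (ii) by optional sampling between the stopping indices $\nu_n$ and $\nu_{n+1}\wedge\gamma_{n,\delta}$, where $\gamma_{n,\delta}$ is the index of the first jump after $\tau_{\nu_n}+\delta$ — this is where $d^u_x(\delta)$ enters, additively in expectation as the overshoot $E[\hat\tau_{\nu_n}(\delta)-\tau_{\nu_n}-\delta]$, not through a tail bound — using a single bump function recentred at $X_{\nu_n}$ (rational translates plus regular conditional probabilities to handle the null sets), which yields $P^u\!\left(\tau_{\nu_{n+1}}\le\tau_{\nu_n}+\delta\,\middle|\,\EuScript{G}_{\nu_n}\right)\le A_f\bigl(\delta+\sup_x d^u_x(\delta)\bigr)+2\varepsilon$. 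The number $N$ of oscillations before $T$ is then controlled geometrically, $P^u(N>k)\le e^T\lambda^k$ with $\lambda<1$, via $E^u\!\left[e^{-(\tau_{\nu_{i+1}}-\tau_{\nu_i})}\,\middle|\,\EuScript{G}_{\nu_i}\right]\le\lambda$ and Lemma~1.4.5 of \cite{strook}, giving the bound $k\bigl[A_f(\delta+\sup_x d^u_x(\delta))+2\varepsilon\bigr]+e^T\lambda^k$, which vanishes upon $\delta\to0$, $\varepsilon\to0$, $k\to\infty$. This geometric control of the number of oscillation epochs is the device your sketch is missing, and it cannot be replaced by a union bound over $O(1/\delta)$ deterministic windows.
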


\begin{remark}
  Note that $d^u_x(t)$ does not depend on $n$, since the pair $(X_n, \tau_n)$ forms a time-homogeneous, discrete-time Markov chain, and $\hat\tau_n(t) - \tau_n$ is
  independent of $\tau_k$, $k \le n$, and depends only on the state $X_n$.
\end{remark}
\begin{proof}

The proof follows the steps of the original proof of Theorem~1.4.6 from \cite{strook}.
First, for a function $y \in \Omega = D[0,\infty)$ and $\delta > 0$, we define
\[
  w^\prime_y(\delta; T) = \inf_{\{0=t_0 < t_1 < \ldots < t_n=T \}} 
  \max_{1 \le i \le n} \sup_{s,t \in [t_{i-1},t_i)} |y(t) - y(s)|,
\]
where the infimum is taken over all sets $\{t_0, \ldots, t_n\}$ such that  
$0 = t_0 < t_1 < \ldots < t_n = T$ ($n$ is arbitrary) and
$\min_{1 \le i < n} (t_i - t_{i-1}) > \delta$ (see \cite{billingsley}, p. 171 for details).

From \cite{billingsley}, Theorem~16.8, we know that the family of distributions $\EuScript{P}$ is tight if and only if condition~\eqref{sumbart_tightness_cond1}
holds, together with the following condition:
\begin{equation}\label{submart_tightness_cond2}
  \lim_{\delta \to 0} \limsup_u P^u\!\left(\left\{y \in \Omega : w^\prime_y(\delta; T) \ge \rho \right\}\right) = 0,
\end{equation}
for all $\rho > 0$ and $T > 0$.

Thus, our goal is to prove~\eqref{submart_tightness_cond2}. In what follows we assume that both $T$ and $\rho$ are fixed positive numbers.
Following the proof of Theorem~1.4.6 from \cite{strook}, we define for $\omega \in \Omega$:
\begin{equation}\label{submart_tightness_nudef}
  s_0 = 0, 
  \qquad
  s_n(\omega) = \inf\!\left\{ t \ge s_{n-1}(\omega) : \bigl|\omega(t) - \omega(s_{n-1})\bigr| \ge \rho/4 \right\}.
\end{equation}

Denote
\[
  N = N(\omega) = \min\{ n : s_{n+1}(\omega) > T\},
\]
and
\[
  \Delta_\rho(T; \omega) = \min\{ s_n(\omega) - s_{n-1}(\omega) : 1 \le n \le N(\omega)\}.
\]

Next, we make a crucial observation: each $P^u$ is concentrated on piecewise constant functions in $D[0,\infty)$, so the moments $s_n$ necessarily coincide
with some $\tau_m$. Hence we can define $\nu_n(\omega)$ as the integer such that
\[
  s_n(\omega) = \tau_{\nu_n}(\omega),
\]
where we use the convention $\tau_{\nu_n}(\omega) = \tau_{\nu_n(\omega)}(\omega)$ introduced at the beginning of this section.
Note that $\nu_n$ is a stopping time with respect to the filtration $(\EuScript{G}_m)_{m \ge 0}$.

We then observe that
\[
  P^u\!\left(\{y\in \Omega: w^\prime_y(\delta, T) \ge \rho \right\}) 
  \le P^u\!\left(\Delta_\rho(T) \le \delta\right),
\]
which follows directly from the definitions (the argument literally repeats the proof of Lemma~1.4.1 in \cite{strook}).

For each $\tilde\omega \in \Omega$, let $Q^u_{\tilde \omega}$ be a regular conditional probability,
\[
  Q^u_{\tilde \omega}(\cdot) = P^u(\cdot \mid \EuScript{G}_{\nu_n})(\tilde \omega),
\]
whose existence is guaranteed by Theorems~1.1.6 and~1.1.8 in \cite{strook}.
The corresponding expectation will be denoted by $E^u_{\tilde\omega}$, so that
\begin{equation}\label{submart_tightness_reg_code_exp_def}
  E^u_{\tilde\omega}[\xi] = \int_{\Omega} \xi(\omega)\, Q^u_{\tilde\omega}(d\omega) 
  = E^u\!\left[\xi \,\middle|\, \EuScript{G}_{\nu_n}\right](\tilde\omega).
\end{equation}

Let us now choose $f \in C^\infty_0(\mathbb{R})$ such that $f(0) = 1$, $f(x) = 0$ for $|x| \ge \rho/4$, and $0 \le f \le 1$. 
Define
\[
  \tilde f(x) = \tilde f(x, \tilde \omega) = f\!\left(x - X_{\nu_n}(\tilde \omega)\right),
\]
that is, a random translation of $f$ by $X_{\nu_n}(\tilde \omega)$.

Let $\gamma_{n,\delta}$ be the index of the first jump after $\tau_{\nu_n} + \delta$, so that 
$\hat \tau_{\nu_n}(\delta) = \tau_{\gamma_{n,\delta}}$, where $\hat \tau_n(t)$ was defined in~\eqref{sumbart_tightness_hat_tau_def}.
Note that $\gamma_{n,\delta}$ is also a stopping time with respect to the filtration $\left(\EuScript{G}_m\right)_{m \ge 0}$.

Fix an arbitrary $q \in \mathbb{Q}^n$, an $n$-dimensional vector of rational numbers. Put $f_q(x) = f(x-q)$ and note that $A_{f_q} = A_f$ by the
translation-invariance property of $A_f$ (see condition~(ii)).

Define
\[
  \kappa_{n,\delta} = \kappa_{n,\delta}(\omega) = \nu_{n+1}(\omega) \wedge \gamma_{n,\delta}(\omega).
\]
Note that $\kappa_{n,\delta}$ is a $\left(\EuScript{G}_m\right)_{m \ge 0}$-stopping time and that $\kappa_{n,\delta} \ge \nu_n$ $P^u$-a.s.\ for all $u$.
In what follows, we will write $\kappa$ and $\gamma$ to mean $\kappa_{n,\delta}$ and $\gamma_{n,\delta}$ respectively, as $n$ and $\delta$ are fixed.

By Proposition~IV.5.5 from \cite{neveu} and condition~(ii) of the theorem (namely, the submartingale property of the process 
$f(X_n) + A_f \tau_n$), we may conclude that there exists a null set
$F_q \in \EuScript{B}(\Omega)$ such that for all $\omega^\prime \notin F_q$,
\begin{align}\label{submart_tightness_disc_subm_ineq}
  E^u\!\left[ f_q\!\left(X_{\kappa}\right) + A_f \tau_{\kappa} \,\middle|\, \EuScript{G}_{\nu_n}\right](\omega^\prime) 
  \ge f_q\!\left(X_{\nu_n}(\omega^\prime)\right) + A_f \tau_{\nu_n}(\omega^\prime).
\end{align}

Let us define the null set
\[
  F = \bigcup_{q} F_q,
\]
and fix an arbitrary $\tilde \omega \notin F$. For this fixed $\tilde \omega$ and arbitrary $\varepsilon > 0$, we can find a rational vector
$\tilde q = \tilde q(\varepsilon, \tilde \omega) \in \mathbb{Q}^n$ such that
\[
  \sup_{x \in \mathbb{R}^d} \bigl| \tilde f(x) - f_{\tilde q}(x) \bigr|
  = \sup_{x \in \mathbb{R}^d} \bigl| f\!\left(x - X_{\nu_n}(\tilde \omega)\right) - f\!\left(x - \tilde q\right) \bigr| < \varepsilon.
\]
This is possible because $f$ has compact support and is therefore uniformly continuous.  
It is clear that $\tilde q(\cdot, \varepsilon)$ is $\EuScript{G}_{\nu_n}$-measurable as a function of $\omega$, since it is completely determined by $X_{\nu_n}(\omega)$.

Now, using~\eqref{submart_tightness_disc_subm_ineq}, we obtain
\begin{align*}\label{submart_tightness_disc_subm_ineq_2}
  E^u_{\tilde\omega}\!\left[ \tilde f(X_{\kappa}) + A_f \tau_{\kappa}\right] 
  &= E^u_{\tilde\omega}\!\left[ \bigl(\tilde f(X_\kappa) - f_{\tilde q}(X_\kappa)\bigr) + f_{\tilde q}(X_\kappa) + A_f \tau_{\kappa} \right] \\
  &\ge E^u_{\tilde\omega}\!\left[ f_{\tilde q}(X_\kappa) + A_f \tau_\kappa\right] - \varepsilon \\
  &\ge f_{\tilde q}\!\left(X_{\nu_n}(\tilde\omega)\right) + A_f \tau_{\nu_n}(\tilde\omega) - \varepsilon \\
  &\ge \tilde f\!\left(X_{\nu_n}(\tilde\omega)\right) + A_f \tau_{\nu_n}(\tilde\omega) - 2\varepsilon,
\end{align*}
where $E^u_{\tilde\omega}$ is defined in~\eqref{submart_tightness_reg_code_exp_def}.

Note that $\tilde \omega$ is fixed, so $\nu_n(\tilde \omega)$ is a non-random integer. Also observe that $\tilde f(X_{\nu_n}(\tilde\omega)) = f(0) = 1$.  
Finally, it is impossible that $\tau_{\nu_{n+1}} \in (\tau_{\nu_n} + \delta, \hat\tau_{\nu_n}(\delta))$, since $\tau_{\nu_{n+1}}$ is a jump time, and
$\hat\tau_{\nu_n}(\delta)$ is the first jump time after $\tau_{\nu_n} + \delta$.

Thus, we have a.s.
\[
  E^u_{\tilde\omega}\!\left[ \tilde f(X_{\kappa}) + A_f\bigl(\tau_\kappa - \tau_{\nu_n}(\tilde\omega)\bigr)\right] \ge 1 - 2\varepsilon.
\]
Hence,
\begin{align*}
  E^u_{\tilde\omega}\!\left[1 - \tilde f(X_{\kappa}) \right] 
  &\le E^u_{\tilde\omega}\!\left[A_f\bigl(\tau_\kappa - \tau_{\nu_n}(\tilde\omega)\bigr)\right] + 2\varepsilon \\
  &\le A_f\!\left(\delta + E^u_{\tilde\omega}\!\left[\hat\tau_{\nu_n}(\delta) - \tau_{\nu_n} - \delta\right]\right) + 2\varepsilon \\
  &=   A_f \delta + A_f d^u_{X_{\nu_n}(\tilde\omega)}(\delta) + 2\varepsilon \\
  &\le A_f\!\left(\delta + \sup_x d^u_x(\delta)\right) + 2\varepsilon,
\end{align*}
where $d^u_x(\delta)$ is defined in~\eqref{submart_tightness_dn_def}, and we used the fact that
\[
  \tau_\kappa - \tau_{\nu_n} = \tau_{\nu_{n+1}\wedge \gamma} - \tau_{\nu_n} 
  \le \tau_{\gamma} - \tau_{\nu_n} 
  = \hat\tau_{\nu_n}(\delta) - \tau_{\nu_n}.
\]
Note that $0 \le 1 - \tilde f \le 1$.

Define
\[
  B_{\tilde\omega} = \left\{ \omega \in \Omega : \tau_{\nu_{n+1}}(\omega) \le \tau_{\nu_n}(\tilde\omega) + \delta \right\} \subset \EuScript{G}_{\nu_{n+1}}.
\]
We know that for a regular conditional probability, $Q^u_{\tilde\omega}(C) = 1$ for $C \in \EuScript{B}(\Omega)$ if and only if $\tilde\omega \in C$
(see \cite{strook}, p.~16). Put
\[
  C_{\tilde\omega} = \{\omega \in \Omega : \nu_n(\omega) = \nu_n(\tilde\omega)\},
\]
and note that $\tilde\omega \in C_{\tilde\omega}$.

Using this fact and the definition of $f$, we obtain
\begin{align*}
  E^u_{\tilde\omega}\!\left[\tilde f(X_{\kappa}) \mathds{1}_{B_{\tilde\omega}}\right] 
  &= \int_{B_{\tilde\omega}} f\!\left(X_{\nu_{n+1}}(\omega) - X_{\nu_n}(\tilde \omega)\right)\, Q^u_{\tilde\omega}(d\omega) \\
  &= \int_{B_{\tilde\omega} \cap C_{\tilde\omega}} f\!\left(X_{\nu_{n+1}}(\omega) - X_{\nu_n}(\tilde\omega)\right)\, Q^u_{\tilde\omega}(d\omega) \\
  &= \int_{B_{\tilde\omega} \cap C_{\tilde\omega}} f\!\left(X_{\nu_{n+1}}(\omega) - X_{\nu_n}(\omega)\right)\, Q^u_{\tilde\omega}(d\omega) = 0,
\end{align*}
since
\[
  \bigl|X_{\nu_{n+1}}(\omega) - X_{\nu_n}(\omega)\bigr| \ge \rho/4.
\]

Thus, we obtain
\begin{align*}
  A_f\!\left(\delta + \sup_x d^u_x(\delta)\right) + 2\varepsilon
  &\ge E^u_{\tilde\omega}\!\left[(1 - \tilde f(X_{\kappa}))\bigl(\mathds{1}_{B_{\tilde\omega}} + \mathds{1}_{\Omega \setminus B_{\tilde\omega}}\bigr)\right] \\
  &=   Q^u_{\tilde\omega}(B_{\tilde\omega}) 
     + E^u_{\tilde\omega}\!\left[\bigl(1 - f(X_{\gamma})\bigr) \mathds{1}_{\Omega \setminus B_{\tilde\omega}}\right] \\
  &\ge Q^u_{\tilde\omega}(B_{\tilde\omega}),
\end{align*}
so we arrive at the inequality
\begin{equation}\label{submart_tightness_main_ineq}
  A_f\!\left(\delta + \sup_x d^u_x(\delta)\right) + 2\varepsilon 
  \ge Q^u_{\tilde\omega}(B_{\tilde\omega}) 
  = P^u\!\left(\tau_{\nu_{n+1}} \le \tau_{\nu_n}(\tilde\omega) + \delta \,\middle|\, \EuScript{G}_{\nu_n}\right)(\tilde\omega).
\end{equation}

Next, for every $k > 0$ we can write
\begin{align*}
  P^u\!\left(\{\omega : \Delta_\rho(T;\omega) \le \delta\}\right) 
  &\le P^u\!\left(\min_{1 \le i \le k} \tau_{\nu_i} - \tau_{\nu_{i-1}} \le \delta \right) + P^u(N > k) \\
  &\le \sum_{i=1}^k P^u\!\left(\tau_{\nu_i} - \tau_{\nu_{i-1}} \le \delta\right) + P^u(N > k) \\
  &\le \sum_{i=1}^k E^u\!\left[P^u\!\left(\tau_{\nu_i} - \tau_{\nu_{i-1}} \le \delta \,\middle|\, \EuScript{G}_{\nu_{i-1}}\right)\right] + P^u(N > k) \\
  &\le k\!\left[A_f\!\left(\delta + \sup_x d^u_x(\delta)\right) + 2\varepsilon\right] + P^u(N > k),
\end{align*}
where we used inequality~\eqref{submart_tightness_main_ineq} and the fact that $A_f$ can be chosen so that it works for all translations of $f$.
    
Next, we write for any $t_0 >0$:
\begin{align*}
  E^u\!\left[e^{-(\tau_{\nu_{i+1}}-\tau_{\nu_i})} \,\middle|\, \EuScript{G}_{\nu_i}\right] 
  &\le P^u\!\left(\tau_{\nu_{i+1}}-\tau_{\nu_i} \le t_0 \,\middle|\, \EuScript{G}_{\nu_i}\right) 
     + e^{-t_0} P^u\!\left(\tau_{\nu_{i+1}}-\tau_{\nu_i} > t_0 \,\middle|\, \EuScript{G}_{\nu_i}\right) \\
  &\le e^{-t_0} + (1-e^{-t_0})\, P^u\!\left(\tau_{\nu_{i+1}}-\tau_{\nu_i} \le t_0 \,\middle|\, \EuScript{G}_{\nu_i}\right) \\
  &\le e^{-t_0} + (1-e^{-t_0})\!\left[ A_f \left(t_0 + \sup_x d^u_{x}(t_0)\right) + 2\varepsilon \right],
\end{align*}
where we used~\eqref{submart_tightness_main_ineq}.  

From condition~(iii) of the theorem we know that
\[
  \limsup_{u \to \infty} \sup_x d^u_x(t_0) \to 0, \qquad t_0 \to 0,
\]
so we can choose $u_0$, $t_0$ and $\varepsilon$ such that
\[
  \lambda := e^{-t_0} + (1-e^{-t_0})\!\left[ A_f \bigl(t_0 + \sup_x d^u_{x}(t_0)\bigr) + 2\varepsilon \right] < 1,
\]
for all $u>u_0$.

Next, by Lemma~1.4.5 from \cite{strook}, we conclude that for $u>u_0$,
\[
  P^u(N>k) \le e^T \lambda^k, \qquad k \ge 0.
\]

Thus, in order to prove the theorem, it remains to show that condition~\eqref{submart_tightness_cond2}, namely
\[
  \lim_{\delta \to 0} \limsup_{u \to \infty} P^u\!\left(\{\omega : \Delta^u_\rho(T;\omega) \le \delta\}\right) = 0,
\]
holds.  

Indeed, we have
\begin{align*}
  \lim_{\delta \to 0} \limsup_{u \to \infty} &P^u \!\left(\{\omega : \Delta^u_\rho(T;\omega) < \delta\}\right) \\
  &\le \lim_{\delta \to 0} \limsup_{u \to \infty} 
      \Bigl(k\bigl[A_f (\delta + \sup_x d^u_{x}(\delta)) + 2\varepsilon \bigr] + P^u(N>k)\Bigr) \\
  &\le 2k\varepsilon + e^T \lambda^k,
\end{align*}
where in the last step we used condition~(iii).  

Now, we set $\varepsilon\to 0$ and then $k\to \infty$.
Thus, condition~\eqref{submart_tightness_cond2} holds, and the theorem is proven.

\end{proof}
\begin{remark}
  Condition~(iii) can be replaced with a somewhat opposite assumption: \\[0.5ex]
  \textbf{Condition (iv).} Assume that there exists a real number $a > 0$ such that for every $x \in \mathbb{R}^d$,
  \begin{equation}\label{condition_iv}
    \limsup_u P^u\!\left(\tau_{n+1} - \tau_n < a \,\middle|\, X_n = x \right) = 0.
  \end{equation}
  This condition means that asymptotically there are no jumps on the interval $(\tau_n, \tau_n + a)$. 
  The proof of tightness is trivial in this case, since 
  equation~\eqref{submart_tightness_cond2} takes the form (here we follow the notation from the proof)
  \begin{align*}
    \lim_{\delta \to 0} \limsup_{u \to \infty} 
      &P^u\!\left(\{\omega : \Delta^u_\rho(T;\omega) \le \delta\}\right)  \\
    &\le \lim_{\delta \to 0} \limsup_{u \to \infty} \left(\sum_{i=1}^k E^u\!\left[P^u\!\left(\tau_{\nu_i} - \tau_{\nu_{i-1}} \le \delta \,\middle|\, \EuScript{G}_{\nu_{i-1}}\right)\right] + P^u(N>k)\right) \\
    &= \limsup_{u\to \infty} P^u(N>k) \;\longrightarrow\; 0, \qquad k \to \infty.
  \end{align*}
\end{remark}

\section{Discussion and examples}\label{counterexample_section}
In the previous section we saw that each of Conditions~(iii) or~(iv) guarantees tightness (assuming the submartingale conditions~(i) and~(ii)).
At the same time, it is clear that neither Condition~(iii) nor Condition~(iv) is sufficient on its own.  
The following example shows two things:
\begin{itemize}
  \item These conditions cannot be omitted entirely;
  \item Tightness fails in situations where Conditions~(iii) and~(iv) are mixed in some sense.
\end{itemize}

Consider the sequence of semi-Markov processes $\zeta_n(t)$, $n \in \{1,2,3,\ldots\}$.  
Fix $n$. Let $\zeta_n(t)$ start at $0$ and make a jump to $1$ at one of two times, $1/n$ or $1$, each with probability $1/2$. 
After that, $\zeta_n(t)$ remains equal to $1$ forever.

To show that tightness does not hold, we use the same criterion from \cite{billingsley}, Theorem~16.8, that was applied in the proof of
Theorem~\ref{submart_tightness}. Theorem~16.8 states that a family of probability measures is tight if and only if both conditions~\eqref{sumbart_tightness_cond1} and~\eqref{submart_tightness_cond2} hold.  
We will show that condition~\eqref{submart_tightness_cond2} fails in this case.  

Recall the definition of $w^\prime$ from the proof of Theorem~\ref{submart_tightness}.  
For $y \in D[0,\infty)$ and $\delta > 0$ define
\[
  w^\prime_y(\delta; T) = \inf_{\{0=t_0 < t_1 < \ldots < t_n = T\}} 
    \max_{1 \le i \le n} \sup_{s,t \in [t_{i-1},t_i)} |y(t) - y(s)|,
\]
where the infimum is taken over all partitions $\{t_0, \ldots, t_n\}$ such that $0 = t_0 \le t_1 < \ldots < t_n = T$ ($n$ arbitrary) and
$\min_{1 \le i < n} (t_i - t_{i-1}) > \delta$.

Our goal is to show that
\[
  \lim_{\delta \to 0} \limsup_{n \to \infty} 
  P^n\!\left(\left\{ y \in \Omega : w^\prime_y(\delta; T) > \rho \right\}\right) > 0.
\]

Indeed, fix arbitrary $\rho \in (0,1)$ and $\delta > 0$.  
We can choose $n$ large enough so that $1/n < \delta$, which gives
\[
  \limsup_{n \to \infty} 
  P^n\!\left(\left\{ y \in \Omega : w^\prime_y(\delta; T) > \rho \right\}\right) = \tfrac{1}{2}.
\]

On the other hand, the submartingale property does hold. It suffices to check it at the first jump:
\[
  f(0) \le E^n[f(1) + A_f \tau_1] = f(1) + \tfrac{1}{2}\bigl(A_f/n + A_f\bigr),
\]
which is true if $A_f = 4 \sup |f|$.

Next, we demonstrate how Condition~(iii) can be verified in a situation where a family of semi-Markov processes is obtained via a time-scale and space-scale
transformation of a single process, which is typical in diffusion and averaging approximations.

\section{Space-time scaled semi-Markov processes}\label{scale_section}

Assume the semi-Markov process $x(t)$ is given in the sense of Definition~2, with associated Markov renewal process $(x_n, \tau_n)_{n \ge 0}$ and an associated family of holding-time distributions $\{F_x : x \in \mathbb{R}^d\}$.  
Let $\theta_n = \tau_{n+1} - \tau_n$. Note that $\theta_n$ depends only on $x_n$ and does not depend on $n$ or $\tau_n$.

We then generate a family of processes $(\zeta^n(t))_{n \ge 1}$ defined by
\[
  \zeta^n(t) = \frac{x(a_n t)}{b_n}, \qquad t \ge 0,
\]
where $\{a_n\}_{n \ge 1}$ and $\{b_n\}_{n \ge 1}$ are two increasing sequences of positive numbers such that $a_n \wedge b_n \to \infty$ as $n \to \infty$.  
Typical examples are $b_n = n$ and $a_n = n^2$ for diffusion schemes, or $a_n = n$ for averaging schemes (see \cite{korolyk} for details).
We denote by $(X^n_j, \tau^n_j)$ a Markov renewal process that is associated with the semi-Markov process $(\zeta^n(t))_{t\ge 0}$.

It follows from the construction that all processes $(x(t))_{t \ge 0}$, $(x_j, \tau_j)_{j \ge 0}$, $(\zeta^n(t))_{t \ge 0}$, and $(X^n_j, \tau^n_j)_{j \ge 0}$ are
defined on the same probability space.  
The next theorem gives a condition that allows one to verify condition~(iii).

\begin{thm}\label{time_scale_thm}
  Let $(\zeta^n)_{n \ge 1}$ be the family of space-time-scaled semi-Markov processes defined above.  
  Assume the following condition holds:
  \begin{equation}\label{time_scale_main_cond}
    \lim_{t \to 0} \limsup_{n \to \infty} \frac{1}{a_n} 
      \sup_{x} \int_{a_n t}^\infty \frac{\bar{F}_x(r)}{\bar{F}_x(a_n t)} \, dr = 0,
  \end{equation}
  where $\bar{F}_x = 1 - F_x$ is the tail distribution function associated with $F_x$.  
  Then condition~(iii) of Theorem~\ref{submart_tightness} holds.
\end{thm}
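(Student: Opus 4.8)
The plan is to reduce the statement about the scaled family $(\zeta^n)$ to a uniform renewal-theoretic estimate for the single underlying process $x(t)$, and then to close that estimate with a Lipschitz property of the mean residual life. First I would unwind the construction. Since $\zeta^n(t)=x(a_nt)/b_n$, the chain $(\zeta^n)$ jumps exactly at the times $\tau_m/a_n$ and takes the values $x_m/b_n$; hence the associated Markov renewal process satisfies $X^n_j=x_j/b_n$, $\tau^n_j=\tau_j/a_n$, and its holding-time tail from a state $x$ is $\bar F^n_x(u)=\bar F_{b_nx}(a_nu)$. By time-homogeneity and the remark following Theorem~\ref{submart_tightness}, $d^n_x(t)$ equals $a_n^{-1}$ times the expected forward recurrence time $B(s):=\hat\tau_0(s)-s$ of the \emph{original} process at $s=a_nt$, started from the state $b_nx$; and since $b_n>0$, taking $\sup_x$ amounts to taking $\sup_{z\in\mathbb{R}^d}$, so
\[
  \sup_{x}d^n_x(t)=\frac1{a_n}\,\sup_{z\in\mathbb{R}^d}E\!\left[B(a_nt)\mid x_0=z\right].
\]
Thus the theorem follows once we establish the uniform bound $\sup_z E[B(s)\mid x_0=z]\le s+\sup_z m_z(s)$ for every $s>0$, where $m_z(s)=\bar F_z(s)^{-1}\int_s^\infty\bar F_z(r)\,dr$ is the mean residual life (set to $0$ when $\bar F_z(s)=0$).

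To prove this bound I would use a renewal identity. Let $K$ be the a.s.\ unique index with $\tau_K\le s<\tau_{K+1}$ (here one uses that the process has no accumulation of jumps before $s$), so $B(s)=\sum_{k\ge0}(\tau_{k+1}-s)\mathds{1}_{\{\tau_k\le s<\tau_{k+1}\}}$. Conditioning the $k$-th summand on $\EuScript{G}_k$ and using the Markov renewal property $\mathbb{P}(\tau_{k+1}-\tau_k>a\mid\EuScript{G}_k)=\bar F_{x_k}(a)$ together with $E[(\theta-a)^+]=\int_a^\infty\mathbb{P}(\theta>r)\,dr$ (valid for $a\ge0$), one gets
\[
  E\!\left[B(s)\mid x_0=x\right]=\sum_{k\ge0}E\!\left[\mathds{1}_{\{\tau_k\le s\}}\,g(x_k,s-\tau_k)\mid x_0=x\right],\qquad g(z,a):=\int_a^\infty\bar F_z(r)\,dr .
\]
The decisive point is that the events $\{\tau_k\le s<\tau_{k+1}\}$ are disjoint, so $\sum_{k}E[\mathds{1}_{\{\tau_k\le s\}}\bar F_{x_k}(s-\tau_k)\mid x_0=x]=\sum_k\mathbb{P}(\tau_k\le s<\tau_{k+1}\mid x_0=x)\le1$; this prevents the estimate from growing with the (in general unbounded) expected number of jumps before $s$.

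The last ingredient is that $a\mapsto a+m_z(a)$ is non-decreasing: for $0\le a\le s$, splitting the integral at $s$ and bounding $\bar F_z\le\bar F_z(a)$ on $[a,s]$ gives $\int_a^\infty\bar F_z(r)\,dr\le(s-a)\bar F_z(a)+\int_s^\infty\bar F_z(r)\,dr$, and dividing by $\bar F_z(a)\ge\bar F_z(s)$ yields $m_z(a)\le(s-a)+m_z(s)$. Hence $g(z,s-\tau_k)=\bar F_z(s-\tau_k)\,m_z(s-\tau_k)\le\bar F_z(s-\tau_k)\big(s+m_z(s)\big)$, and combining with the renewal identity and the disjointness bound gives $E[B(s)\mid x_0=x]\le s+\sup_z m_z(s)$. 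Substituting $s=a_nt$, $x_0=b_nx$ and dividing by $a_n$ yields
\[
  \sup_{x}d^n_x(t)\le t+\frac1{a_n}\sup_{z}\int_{a_nt}^\infty\frac{\bar F_z(r)}{\bar F_z(a_nt)}\,dr ,
\]
and taking $\limsup_{n\to\infty}$ and then $t\to0$ sends the right-hand side to $0$ by hypothesis~\eqref{time_scale_main_cond}; this is exactly condition~(iii) of Theorem~\ref{submart_tightness}.

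The step I expect to be the main obstacle is obtaining a bound on the forward recurrence time that does not deteriorate as $s\to\infty$. A naive one-step recursion $E[B(s)\mid x]=g(x,s)+E[\mathds{1}_{\{\theta_0\le s\}}E[B(s-\theta_0)\mid x_1]\mid x]$ does not close, because $\sup_z F_z(a_nt)\to1$ as $n\to\infty$; and crude domination of $B(s)$ by the length of the straddling sojourn costs a factor equal to the expected number of renewals in $[0,s]$, which grows linearly. Disjointness of the straddling events and the monotonicity of $a\mapsto a+m_z(a)$ are precisely what circumvents this. Minor care is needed for states with $\bar F_z(s)=0$ (the convention $m_z(s)=0$ makes all the inequalities trivially valid there) and, for full rigour, for a possible accumulation of jump times before $s$, which is negligible under the standing assumptions on $D[0,\infty)$-trajectories.
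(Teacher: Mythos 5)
Your proposal is correct, and its skeleton is the same as the paper's: you decompose the forward recurrence time over the index $j$ of the sojourn straddling the level ($\tau_j\le s<\tau_{j+1}$), apply the Markov renewal property so that the conditional excess becomes $\bar F_z(a)\,m_z(a)$ with $m_z(a)=\bar F_z(a)^{-1}\int_a^\infty\bar F_z(r)\,dr$, and exploit the disjointness of the straddling events so that $\sum_j\mathbb{P}(\tau_j\le s<\tau_{j+1})\le 1$, which is exactly how the paper's chain of equalities closes. The one genuine difference is how the residual life at the intermediate level $a_nt-s$, $s\in[0,a_nt]$, is compared with the quantity appearing in hypothesis~\eqref{time_scale_main_cond}: the paper passes directly from $m_{y/b_n}(a_nt-s)$ to $\sup_y m_y(a_nt)$, which tacitly uses a monotonicity of the mean residual life in its argument that does not hold for general $F_x$ (it holds, e.g., for the exponential and polynomial tails of the final remark, but not always), whereas you prove and use the elementary inequality $m_z(a)\le(s-a)+m_z(s)$ for $a\le s$, which is always valid. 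The price is the extra additive term, giving $\sup_x d^n_x(t)\le t+\frac1{a_n}\sup_z\int_{a_nt}^\infty\frac{\bar F_z(r)}{\bar F_z(a_nt)}\,dr$ instead of the paper's bound without the $t$; since the conclusion is a $\lim_{t\to 0}\limsup_n$ statement, this costs nothing, and your closing step is the more robust of the two. Both arguments share (and you correctly flag) the implicit use of non-explosion of jump times before the level $s$, needed for the straddling decomposition and for the telescoping sum to be at most one.
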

\begin{proof}
Let $P_{x,s}(\mathrm{d}y,\mathrm{d}t)$ be the transition probability of the Markov chain $(x_n,\tau_n)_{n\ge 0}$, and let
$P^{j}_{x,s}(\mathrm{d}y,\mathrm{d}t)$ denote the corresponding $j$-step transition probability.

Consider the measurable space $(\Omega,\EuScript{F})$, where
\[
\Omega = \bigl(\mathbb{R}^d \times [0,\infty)\bigr)^{\infty}
\]
is a countable product space and $\EuScript{F}$ is the corresponding cylinder $\sigma$-field.
Let
\[
\omega = \bigl((\omega_{00},\omega_{01}),(\omega_{10},\omega_{11}),\ldots\bigr) \in \Omega.
\]
Define the random variables
\[
X_j(\omega)=\omega_{j0}, 
\qquad 
\tau_j(\omega)=\omega_{j1},
\qquad 
X_j^n=\frac{X_j}{b_n},
\qquad 
\tau_j^n=\frac{\tau_j}{a_n}.
\]

For every pair $(x,s)\in\mathbb{R}^d\times[0,\infty)$ we may define a probability measure $\mathbb{P}_{x,s}$ on $(\Omega,\EuScript{F})$ such that the finite-dimensional
distributions of the sequence $(X_j,\tau_j)_{j\ge 0}$ coincide with those of the original sequence $(x_j,\tau_j)_{j\ge 0}$ starting at $(x,s)$.
The same is true for the scaled sequence $(X_j^n,\tau_j^n)_{j\ge 0}$.
In fact, not only the finite-dimensional distributions coincide, but also the distributions of the entire sequences as random elements of
$(\Omega,\EuScript{F})$; however, for our purposes it suffices to work with finite-dimensional distributions.

  Condition (iii) may then be restated as
\begin{equation}\label{time_scale_ineq0}
 \limsup_n \sup_{x,s} \mathbb{E}_{x,s}\left[\hat\tau^n_0(t) - s - t\right] \to 0, 
  \quad t \to 0.
\end{equation}
Moreover, it is enough to prove the convergence for $s=0$
\begin{equation}\label{time_scale_ineq1}
 \limsup_n \sup_{x} \mathbb{E}_{x,0}\left[\hat\tau^n_0(t) - t\right] \to 0, 
  \quad t \to 0,
\end{equation}
where $\hat\tau^n_0(t) = \inf_{m>0} \{\tau^n_m : \tau^n_m > \tau^n_0 + t\}$.

In what follows we will simplify the notation by denoting
\[ \mathbb{E}_x = \mathbb{E}_{x,0},\ d^n_x(t) = \mathbb{E}_{x}\left[\hat\tau^n_0(t) - t\right].\]

We can write
\begin{align*}
  d^n_x(t) 
  &= \mathbb{E}_x\!\left[\hat\tau^n_0(t) - t\right] 
   = \sum_{j=0}^\infty \mathbb{E}_x\!\left[\hat\tau^n_0(t) - t;\, \tau^n_j \le t < \tau^n_{j+1}\right] \\
  &= \sum_{j=0}^\infty \mathbb{E}_x\!\left[\tau^n_{j+1} - t;\, \tau^n_j \le t < \tau^n_{j+1}\right] \\
  &= \sum_{j=0}^\infty \mathbb{E}_x\!\left[
      \mathbb{E}_x\!\left[\tau^n_{j+1} - t;\, \tau^n_j \le t < \tau^n_{j+1} \,\middle|\, X^n_j, \tau^n_j \right]
    \right] \\
  &= \sum_{j=0}^\infty \mathbb{E}_x\!\left[
      \mathbb{E}_x\!\left[\theta^n_j - (t-\tau^n_j);\, \theta^n_j > (t-\tau^n_j) \,\middle|\, X^n_j, \tau^n_j \right]
      \mathds{1}_{\{\tau^n_j \le t\}}
    \right],
\end{align*}
where we used the relation $\theta^n_j = \tau^n_{j+1} - \tau^n_j$. Thus, by the Markov property,
\begin{align*}
  d^n_x(t) 
  &= \sum_{j=0}^\infty \mathbb{E}_x\!\left[
       \mathbb{E}_{X^n_j, \tau^n_j}\!\left[\theta^n_0 - (t-\tau^n_j);\, \theta^n_0 > t - \tau^n_j \right]
       \mathds{1}_{\{\tau^n_j \le t\}}
     \right] \\
  &= \sum_{j=0}^\infty \mathbb{E}_x\!\left[
       \mathbb{E}_{X^n_j, \tau^n_j}\!\left[\tfrac{\theta_0}{a_n} - \!\left(t - \tfrac{\tau_j}{a_n}\right);\,
          \tfrac{\theta_0}{a_n} > \!\left(t - \tfrac{\tau_j}{a_n}\right)\right]
       \mathds{1}_{\{\tau_j \le a_n t\}}
     \right] \\
  &= \frac{1}{a_n} \sum_{j=0}^\infty 
     \mathbb{E}_x\!\left[
       \mathbb{E}_{X^n_j, \tau^n_j}\!\left[\theta_0 - (a_n t-\tau_j);\,
         \theta_0 > a_n t - \tau_j\right]
       \mathds{1}_{\{\tau_j \le a_n t\}}
     \right] \\
  &= \frac{1}{a_n} \sum_{j=0}^\infty 
     \int_{\mathbb{R}^d}\int_0^{a_n t} 
       \mathbb{E}_{y/b_n}\!\left[\theta_0 - (a_n t-s);\,
         \theta_0 > a_n t - s\right] 
       P^j_{x,0}(dy, ds),
\end{align*}
where we used the equalities $X^n_j = x_j/b_n$ and $\tau^n_j = \tau_j/a_n$, together with the fact that $\theta_0$ does not depend on $\tau_0$.  
Hence we may omit the second subscript and write $E_{y/b_n}$ instead of $E_{y/b_n, s/a_n}$.

Finally, since $\theta_0$ is a nonnegative random variable, for every $z\in \mathbb{R}^d$
\begin{align*}
  \mathbb{E}_{z}\!&\left[\theta_0 - (a_n t-s);\ \theta_0 > a_n t - s\right] \\
  &= \left(\int_0^\infty 
       \mathbb{P}_z\!\left(\theta_0 > r+ (a_n t-s)\,\middle|\, \theta_0 > a_n t - s\right) dr\right) 
       \mathbb{P}_z(\theta_0 > a_n t-s) \\
  &= \left(\int_0^\infty 
        \frac{1 - F_z(r+ (a_n t-s))}{1-F_{z}(a_n t - s)}\, dr\right) 
       \bigl(1-F_z(a_n t-s)\bigr) \\
  &= \left(\int_0^\infty 
        \frac{\bar{F}_z(r+ (a_n t-s))}{\bar{F}_z(a_n t - s)}\, dr\right) 
       \bar{F}_z(a_n t-s).
\end{align*}

Thus, we can continue
\begin{align*}
  d^n_x(t)  
  &= \frac{1}{a_n} \sum_{j=0}^\infty \int_{\mathbb{R}^d}\int_0^{a_n t} 
       \int_0^\infty 
         \frac{\bar{F}_{y/b_n}(r+a_n t-s)}{\bar{F}_{y/b_n}(a_n t - s)}\,dr \,
         \bar{F}_{y/b_n}(a_n t-s)\, P^j_{x,0}(dy, ds) \\
  &= \frac{1}{a_n} \sum_{j=0}^\infty \int_{\mathbb{R}^d}\int_0^{a_n t} 
       \int_{a_n t-s}^\infty 
         \frac{\bar{F}_{y/b_n}(r)}{\bar{F}_{y/b_n}(a_n t - s)}\,dr \,
         \bar{F}_{y/b_n}(a_n t-s)\, P^j_{x,0}(dy, ds) \\
  &\le \left(\frac{1}{a_n}\sup_{y}\int_{a_n t}^\infty 
           \frac{\bar{F}_y(r)}{\bar{F}_y(a_n t)}\,dr\right) 
       \sum_{j=0}^\infty \int_{\mathbb{R}^d}\int_0^{a_n t}  
         \bar{F}_{y/b_n}(a_n t-s)\, P^j_{x,0}(dy, ds) \\
  &= \left(\frac{1}{a_n}\sup_{y}\int_{a_n t}^\infty 
           \frac{\bar{F}_y(r)}{\bar{F}_y(a_n t)}\,dr\right) 
       \sum_{j=0}^\infty 
         \mathbb{E}_x\!\left[\mathbb{P}_{X_j,\tau_j}(\theta_0 > a_n t-\tau_j)\,
         \mathds{1}_{\{\tau_j \le a_n t\}}\right] \\
  &= \left(\frac{1}{a_n}\sup_{y}\int_{a_n t}^\infty 
           \frac{\bar{F}_y(r)}{\bar{F}_y(a_n t)}\,dr\right) 
       \sum_{j=0}^\infty 
         \mathbb{E}_x\!\left[\mathbb{P}_x(\tau_{j+1} > a_n t \mid X_j, \tau_j)\,
         \mathds{1}_{\{\tau_j \le a_n t\}}\right] \\
  &= \left(\frac{1}{a_n}\sup_{y}\int_{a_n t}^\infty 
           \frac{\bar{F}_y(r)}{\bar{F}_y(a_n t)}\,dr\right) 
       \sum_{j=0}^\infty 
         \mathbb{E}_x\!\left[\mathbb{P}_x(\tau_{j} \le a_n t < \tau_{j+1})\right] \\
  &= \frac{1}{a_n}\sup_{y}\int_{a_n t}^\infty 
       \frac{\bar{F}_y(r)}{\bar{F}_y(a_n t)}\,dr.
\end{align*}
  Thus,
\[
  \lim_{t\to 0} \limsup_{n\to\infty} \sup_x d^n_x(t) 
  = \lim_{t\to 0} \limsup_{n\to\infty} 
    \frac{1}{a_n}\sup_{x}\int_{a_n t}^\infty 
      \frac{\bar{F}_x(r)}{\bar{F}_x(a_n t)}\,dr 
  = 0,
\]
  by condition \eqref{time_scale_main_cond}.
\end{proof}
\begin{remark}
As an example, consider the case when $F_x = F$ does not depend on $x$ (so that all holding times are identically distributed).  
In this situation, condition \eqref{time_scale_main_cond} holds if the tails of $F$ are exponential or polynomial.

Indeed, if
\[
  c_1 e^{-\alpha u} \le \bar{F}(u) \le c_2 e^{-\alpha u},
  \]
  for some constants $c_1, c_2 > 0$, and $\alpha > 0$ then
\[
  \frac{1}{a_n}\int_{a_n t}^\infty \frac{\bar{F}(r)}{\bar{F}(a_n t)}\,dr 
  \le {c_2\over c_1} \frac{1}{a_n}\int_{a_n t}^\infty e^{-\alpha r+\alpha a_n t}\,dr 
  = {c_2\over \alpha c_1} \frac{1}{a_n} \to 0, \quad n\to\infty.
\]

If instead for $c_1, c_2 > 0$ and $\alpha > 1$
\[
  c_1 u^{-\alpha} \le \bar{F}(u) \le c_2 u^{-\alpha},
\]
then
\[
  \frac{1}{a_n}\int_{a_n t}^\infty \frac{\bar{F}(r)}{\bar{F}(a_n t)}\,dr 
  \le \frac{c_2 (a_n t)^\alpha}{c_1 a_n }\int_{a_n t}^\infty r^{-\alpha} dr
  = {c_2\over (\alpha-1) c_1}t \to 0, \quad t\to 0.
\]
\end{remark}

\bibliographystyle{unsrt}  
\bibliography{references}

\end{document}